\setlist[enumerate,1]{font=\upshape, itemsep=1ex}\setlist[itemize,1]{font=\upshape, itemsep=1ex}
\def\C{{\mathbb C}}
\def\calc{\mathcal{C}}
\def\cs{\mathbin{\#}}
\newcommand{\spinc}{\ifmmode{{\mathfrak s}}\else{${\mathfrak s}$\ }\fi}
\newcommand{\spinct}{\ifmmode{{\mathfrak t}}\else{${\mathfrak t}$\ }\fi}
\newcommand{\spincr}{\ifmmode{{\mathfrak r}}\else{${\mathfrak r}$\ }\fi}
\newcommand{\fig}[2] { \includegraphics[scale=#1]{#2} }
\newtheorem{theorem}{Theorem} 
\newtheorem{lemma}[theorem]{Lemma}
\newtheorem{corollary}[theorem]{Corollary}
\theoremstyle{definition}
\newtheorem{problem}{Problem}
\newtheorem{example}[theorem]{Example}
\begin{document}
\title{The cobordism distance between a knot and its reverse}
 
\author{Charles Livingston}
\thanks{This work was supported by a grant from the National Science Foundation, NSF-DMS-1505586.   }
\address{Charles Livingston: Department of Mathematics, Indiana University, Bloomington, IN 47405}\email{livingst@indiana.edu}
 


\begin{abstract}  We consider the question of how knots and their reverses are related in the concordance group $\calc$.  There are examples of knots for which $K \ne K^r \in \calc$.  This paper studies the  cobordism distance, $d(K, K^r)$.  If $K \ne K^r \in \calc$, then   $d(K, K^r) >0$ and it  elementary to see that for all $K$, $d(K, K^r) \le 2g_4(K)$.  It is known that $d(K,K^r)$ can be arbitrarily large.
Here we present a  proof  that for non-slice  knots satisfying $g_3(K) = g_4(K)$, one has $d(K,K^r) \le 2g_4(K) -1$.  This family includes all strongly quasi-positive knots and all non-slice genus one knots.  We also construct knots $K$ of arbitrary four-genus  for which   $d(K,K^r) = g_4(K)$.  Finding knots for which $d(K,K^r) > g_4(K)$ remains an open problem.

 \end{abstract}

\maketitle


\section{Introduction}
  
 This work explores concordance relationships between knots and their reverses.  Recall that if a knot $K$ is defined  formally as an oriented pair $(S^3, K)$, then 
 $K^r $ is shorthand for   $(S^3, -K)$.   The results here apply equally in the smooth and in the topological locally flat category.
 
 The problem of distinguishing a knot $K$ from $K^r$ did not receive much attention until Fox noted the difficulty of the problem in~\cite[Problem 10]{MR0140100}.  However,   from the perspective of  the classification of knots, the distinction is essential: the connected sum of knots  is not well-defined outside of the oriented category.  
 Advances in the study of knot reversal include~\cites{MR683753, MR0158395, MR1395778}. 
 
  Distinguishing the concordance classes of $K$ and $K^r$ is more difficult; see~\cite{MR1670424} for an example in which Casson-Gordon invariants, realized as twisted Alexander polynomials, are applied to the problem.  Infinite families of topologically slice knots for which $K \ne K^r$ in smooth concordance are produced in~\cite{2019arXiv190412014K}.  The issue appears  in current research concerning satellite operations.  Given a knot $P \subset S^1 \times B^2$, there is a self-map of  the concordance group $\calc$  given by using $P$ to form  a satellite, often denoted  $K \to  P(K)$, where here $K$ and $P(K)$ denote smooth concordance classes.  In general, $P(K) \ne P(K^r)$,  $P(K) \ne P^r(K)$, and $P^r(K^r) \ne P(K)$.    Work about   such satellite constructions 
  includes~\cite{MR3848393,2018arXiv180904186H,MR3286894,MR3784230}.
  
  Being independent in $\calc$ is only one measure of knots being distinct.  Here we consider the cobordism distance, which is a metric on $\calc$:  $d(K,J)$ is the minimum genus of a cobordism from $K$ to $J$; it equals the four-ball genus, $g_4(K \cs -J)$.  Research about the cobordism distance  includes a careful analysis of torus knots.  For instance, for distinct positive parameters, Litherland~\cite{MR547456} proved  that  the torus knots $T(p,q)$ and $T(p',q')$ are linearly independent.  On the other hand, Feller-Park~\cite{2019arXiv191001672F}    analyzed the question of determining for which pairs $ d(T(p,q), T(p',q'))= 1$, resolving all but one case: $d(T(3,14), T(5,8))$.  Related  work includes~\cite{MR2975163,MR3622312}
  
 The triangle inequality states that for all $K$ and $J$, $d(K,J) \le g_4(K) + g_4(J)$.     It seems possible that   in some generic sense, one usually has an equality.  In particular, one might suspect that in general  $d(K,K^r ) = 2g_4(K)$.  Of course, this wouldn't appear for low crossing number knots,  most of which are reversible.  Here we present a simple construction to prove  that for a large class of knots  this equality does not hold.
 
 \begin{theorem}\label{thm:main}
 If $K$ is nontrivial,  then $d(K, K^r) < 2g_3(K)$.
 \end{theorem}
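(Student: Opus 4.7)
The plan is to construct an explicit cobordism from $K$ to $K^r$ in $S^3 \times [0,1]$ of genus $2g_3(K) - 1$; this immediately gives $d(K,K^r) \leq 2g_3(K) - 1 < 2g_3(K)$. Let $F \subset S^3$ be a Seifert surface realizing $g_3(K) = g$, with $\partial F = K$. Because $K$ is nontrivial, $g \geq 1$, so $F$ admits a non-separating simple closed curve $\gamma$; let $\gamma^+$ and $\gamma^-$ denote the two disjoint parallel push-offs of $\gamma$ in $F$ that form the boundary of a tubular neighborhood $N(\gamma) \subset F$.

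The cobordism $\Sigma \subset S^3 \times [0,1]$ will be assembled from four pieces. In the collar $S^3 \times [0, \delta]$, push a copy $S_0$ of $F \setminus \operatorname{int} N(\gamma)$ into the 4-manifold so that its $K$-boundary lies on $S^3 \times \{0\}$ and its two $\gamma$-boundaries lie on $S^3 \times \{\delta\}$. In the collar $S^3 \times [1-\delta, 1]$ place a symmetric copy $S_1$ with the opposite orientation, so its $K$-boundary equals $K^r \subset S^3 \times \{1\}$. In the middle region $S^3 \times [\delta, 1-\delta]$, insert the two disjoint product annuli $\gamma^+ \times [\delta, 1-\delta]$ and $\gamma^- \times [\delta, 1-\delta]$, which glue the four free $\gamma^\pm$-boundaries of $S_0$ and $S_1$ together.

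An Euler characteristic calculation determines the genus. Each $S_i$ is a genus-$(g-1)$ surface with three boundary components, so $\chi(S_i) = 1 - 2g$; each connecting annulus contributes $0$; and all four gluings are along circles, so $\chi(\Sigma) = 2(1-2g) = 2-4g$. Since $\gamma$ is non-separating each $S_i$ is connected, hence $\Sigma$ is connected, and with two boundary components ($K$ and $K^r$) this forces $g(\Sigma) = 2g - 1$, as required.

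The one step needing care is orientation matching. Because $S_1$ carries the opposite orientation to $S_0$, the induced orientations on the paired $\gamma^\pm$-boundary circles of $S_0$ and $S_1$ disagree, which is precisely what the product orientation on each connecting annulus supplies, so the four gluings are compatible and $\Sigma$ is a coherently oriented cobordism with $\partial \Sigma = K \sqcup K^r$. The embedding is automatic: the four pieces live in disjoint time-intervals of $S^3 \times [0,1]$, and $\gamma^+$ and $\gamma^-$ are disjoint in $S^3$.
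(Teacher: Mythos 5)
The construction breaks down at exactly the step you single out as ``the one step needing care,'' and the failure is fatal: with the orientations you describe, the surface $\Sigma$ is a genus-$(2g-1)$ cobordism from $K$ to $K$, not from $K$ to $K^r$, so it says nothing about $d(K,K^r)$. The convention-free anchor is the product annulus $K\times[0,1]$, the model certificate for $d(K,K)=0$: since the outward normals at its two ends point in opposite directions, the orientations it induces on its two boundary circles are \emph{opposite}, i.e.\ its induced oriented boundary is exactly $K\times\{0\}\,\cup\,K^r\times\{1\}$ (for one choice of orientation). That is precisely the boundary data your $\Sigma$ achieves. If ``$\partial\Sigma=K\sqcup K^r$'' in this induced-orientation sense certified a cobordism from $K$ to $K^r$, then the product annulus itself would show every knot is concordant to its reverse, contradicting, for instance, the example $8_{17}$ cited in the paper. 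The correct reading is the opposite one: a surface certifying $d(K_0,K_1)\le h$ must induce $-K_0$ at the bottom and $+K_1$ at the top (up to a global flip), so a genuine $K$-to-$K^r$ cobordism must induce the \emph{same} orientation on the underlying circle at both ends, whereas your gluing of $S_0$ to $S_1=-S_0$ through product annuli forces opposite induced orientations at the two ends --- the same pattern as the trivial annulus. Note also that taking $S_1$ with the opposite orientation is forced on you by the product annuli: with $S_1=+S_0$ the gluings are incompatible, and repairing them would require annuli in $S^3\times[\delta,1-\delta]$ joining each $\gamma^{\pm}$ to its own reverse, i.e.\ concordances from the knot $\gamma$ to $\gamma^r$, which is exactly the kind of statement one cannot assume.

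What is missing is genuine input about reversal, and this is where the paper's argument differs. It works in $B^4$ with the natural Seifert surface $F\,\#\,F^*$ for $K\,\#\,-K^r$: the curve $\alpha^*$ on $F^*$ corresponding to a nonseparating curve $\alpha\subset F$ is, with a suitable orientation, the concordance inverse $-\alpha$, so a band sum $\alpha\,\#_b\,\alpha^*$ lies on $F\,\#\,F^*$ with Seifert framing zero and is concordant to the slice knot $\alpha\,\#\,-\alpha$ (Lemma~\ref{lemma:1}); surgering the pushed-in Seifert surface along a slice disk for this curve cuts the genus from $2g$ to $2g-1$. Your construction uses no such sliceness --- indeed no property of $K$ beyond the existence of a nonseparating curve on a Seifert surface --- and the orientation analysis above shows why, without it, the surface you build can only connect $K$ to itself.
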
 
 
  \begin{corollary}\label{corollary:main}
 If $g_3(K) = g_4(K) \ne 0$,  then $d(K, K^r) < 2g_4(K)$.
 \end{corollary}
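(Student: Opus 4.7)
The plan is straightforward: Corollary \ref{corollary:main} is an immediate consequence of Theorem \ref{thm:main} by substitution. The first step is to verify that the hypothesis of the theorem is met. Since the hypothesis of the corollary assumes $g_3(K) = g_4(K) \ne 0$, we have in particular $g_3(K) \ne 0$, and since the unknot is the unique knot with vanishing Seifert genus, $K$ is nontrivial. Thus Theorem \ref{thm:main} applies.

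The second step is simply to substitute. Theorem \ref{thm:main} gives $d(K,K^r) < 2g_3(K)$, and the hypothesis $g_3(K) = g_4(K)$ converts this to $d(K,K^r) < 2g_4(K)$, which is the desired conclusion.

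There is no real obstacle to overcome, as the substantive content resides entirely in Theorem \ref{thm:main}. The role of the corollary is to repackage the bound in terms of the four-genus for a natural and broad class of knots: as the introduction points out, the equality $g_3(K) = g_4(K)$ holds for all strongly quasi-positive knots and for all non-slice genus one knots, so the corollary provides a concrete four-genus bound applicable to many familiar families.
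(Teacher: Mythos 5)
Your proposal is correct and is exactly the argument the paper intends (the corollary is left as an immediate consequence of Theorem~\ref{thm:main}): nontriviality follows from $g_3(K)\ne 0$, and the hypothesis $g_3(K)=g_4(K)$ converts the bound $d(K,K^r)<2g_3(K)$ into $d(K,K^r)<2g_4(K)$. No issues.
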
 
 
 Here are a few  observations about Corollary~\ref{corollary:main}.
 \begin{enumerate}
 \item The hypothesis hold for all strongly quasi-positive  knots~\cite{MR1193540}.
 
 \item The hypothesis holds for all non-slice knots for which $g_3(K) = 1$.
 
 \item The theorem and corollary can be strengthened by replacing the condition  $g_3(K) = g_4(K)$ with the  condition on the concordance genus, $g_c(K) = g_4(K)$. 
 
 \item  There are no known  examples of a non-slice knot for which  $d(K, K^r) = 2g_4(K)$.  In fact, there are no examples known for which  $d(K, K^r) >g_4(K)$.
 
 \end{enumerate}
 
 The proof of Theorem~\ref{thm:main} is in Section~\ref{section:proof1}.  It generalizes an observation made in~\cite{MR2195064} about  a specific family of genus one knots.  In Section~\ref{section:exproof1} we discuss extensions of this result.

 Section~\ref{section:examples} presents examples of knots   with arbitrarily large four-genus 
 for which  $d(K, K^r) = g_4(K)$, here stated formally:
 
 \begin{theorem}\label{thm:g4}  For every integer $g\ge0$, there exists a knot $K$ for which $d(K, K^r) = g_4(K) = g$.
 \end{theorem}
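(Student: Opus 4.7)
The plan is to set $K_g$ to be the $g$-fold connected sum of a single carefully chosen genus-one knot $J$ with itself. For $g=0$ the unknot works, so assume $g\ge 1$. I would select $J$ with $g_3(J)=g_4(J)=1$, $\tau(J)=1$, $J\ne J^r$ in $\calc$, and, additionally, an extra concordance homomorphism $\phi\colon\calc\to\R$ satisfying $|\phi|\le g_4$ and $\phi(J\cs -J^r)\ge 1$. The existence of such a $J$ is the crucial input; it is strongly suggested by~\cite{MR1670424} and~\cite{MR2195064}, where Casson--Gordon signatures (or twisted Alexander polynomial signatures from prime-power cyclic branched covers) detect $J\ne J^r$ while simultaneously bounding $g_4$ from below.

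The easy half of the theorem then follows from additivity and the subadditive inequalities. Corollary~\ref{corollary:main} applied to $J$ gives $d(J,J^r)\le 2g_4(J)-1=1$, and the hypothesis $J\ne J^r$ forces $d(J,J^r)\ge 1$, so $d(J,J^r)=1$. Subadditivity of $g_4$ and of the cobordism distance under connected sum then yield $g_4(K_g)\le g\cdot g_4(J)=g$ and $d(K_g,K_g^r)\le g\cdot d(J,J^r)=g$. The matching lower bound $g_4(K_g)\ge g$ is immediate from $\tau(K_g)=g\tau(J)=g$ together with $|\tau|\le g_4$.

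The main obstacle is the lower bound $d(K_g,K_g^r)\ge g$. Writing $L=J\cs -J^r$, one checks that $K_g\cs -K_g^r$ is concordant to the $g$-fold connected sum of $L$, so the task reduces to showing $g_4\ge g$ for that connected sum. None of the standard homomorphism concordance invariants help here: $\tau$, the Rasmussen invariant $s$, and Upsilon all vanish on $L$, because they are preserved under reversal of orientation and change sign under mirror image. This is precisely where $\phi$ becomes essential: additivity yields $\phi$ of the $g$-fold sum equal to $g\phi(L)\ge g$, and $|\phi|\le g_4$ then gives the required bound. The genuinely delicate step, which I would carry out by adapting the Casson--Gordon calculations in the cited references, is to arrange the normalization $\phi(L)\ge 1$ rather than merely $\phi(L)>0$; this typically requires a careful choice of $J$, of the prime-power cyclic branched cover, and of the character used to build the signature $\phi$.
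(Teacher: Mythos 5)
Your argument stands or falls on the assumed existence of a real-valued concordance homomorphism $\phi$ with $|\phi|\le g_4$ and $\phi(J\cs -J^r)\ge 1$, and that is a genuine gap: no such homomorphism is known, and the references you lean on do not supply one. The invariants in \cite{MR1670424} and \cite{MR2195064} are Casson--Gordon (twisted polynomial/signature) invariants attached to characters on prime-power branched covers, and they are not additive under connected sum: when one passes to $nK$, the characters that must be considered are constrained only by a metabolizer condition, so the resulting genus bounds have the form ``for \emph{some} multiplicities $a,b$ with $0\le a,b\le n$, a certain signature combination is at most a multiple of the genus,'' not the form $|\phi|\le g_4$ for a fixed homomorphism $\phi$. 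So the difficulty is structural, not the normalization $\phi(L)\ge 1$ you flag as the delicate step. Indeed, if such a $\phi$ existed, then together with your $\tau$ argument it would produce a single knot $J$ with $g_4(nJ)=n=d(nJ,nJ^r)$ for all $n$, which the paper explicitly records as an open problem; all known homomorphisms bounded by $g_4$ (signatures, $\tau$, $s$, $\Upsilon$) are insensitive to reversal, exactly as you observe, and nothing in the cited Casson--Gordon work converts those non-additive invariants into one that is.

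For comparison, the paper's proof is built precisely to avoid needing such a $\phi$. It takes a genus-one pattern with two companion knots, $K(A,B)$, and invokes Gilmer-type theorems: if $g_4\big(nK(A,B)\big)<n$, or if $g_4\big(n(K(A,B)\cs -K(A,B)^r)\big)<n$, then for some $0\le a,b\le n$, not both zero, an explicit combination of bounded Casson--Gordon terms with $a\sigma_{1/3}(A)+b\sigma_{1/3}(B)$ (respectively with $a\sum_j\sigma_{j/7}(A)-b\sum_j\sigma_{j/7}(B)$) must be small. One then chooses $A$ first and $B$ second, with these Levine--Tristram signatures so large (with sizes depending on $n$) that every one of the finitely many possible inequalities fails; in particular the knot realizing $d(K,K^r)=g_4(K)=g$ necessarily depends on $g$ in this argument. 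Your upper-bound half (Corollary~\ref{corollary:main} plus subadditivity of $g_4$ and of cobordism distance) and the $\tau$ lower bound for $g_4(K_g)$ are fine, but the lower bound on $d(K_g,K_g^r)$ --- the heart of the theorem --- currently rests on an unproven existence claim that is itself at least as hard as the open problems the paper poses.
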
  The proof uses techniques from the work of Gilmer~\cite{MR656619} that applied Casson-Gordon theory to study the four-genus  of knots.  It slightly refines a result in~\cite{MR2195064} where it was shown that $g_4(K \cs -K^r)$  can be arbitrarily large.

 \medskip
 \noindent{\it Acknowledgments.}  Thanks to JungHwan Park for providing valuable feedback on an early version of this paper.  
 
 \section{Proof of Theorem~\ref{thm:main}}\label{section:proof1} 
 
 A basic knot theory reference, such as the text by Rolfsen~\cite{MR0515288}, is sufficient for the results used in the proof here.  A schematic illustration for the  proof is presented   in Figure~\ref{fig:schematic}.

\begin{figure}[h]
\labellist
\pinlabel {\text{\Large{$J_1$}}} at 47 30
\pinlabel {\text{\Large{$J_2$}}} at 47 110
\pinlabel {\text{\Large{$-J_2$}}} at 204 30
\pinlabel {\text{\Large{$-J_1$}}} at 205 110
\endlabellist
\includegraphics[scale=1.1]{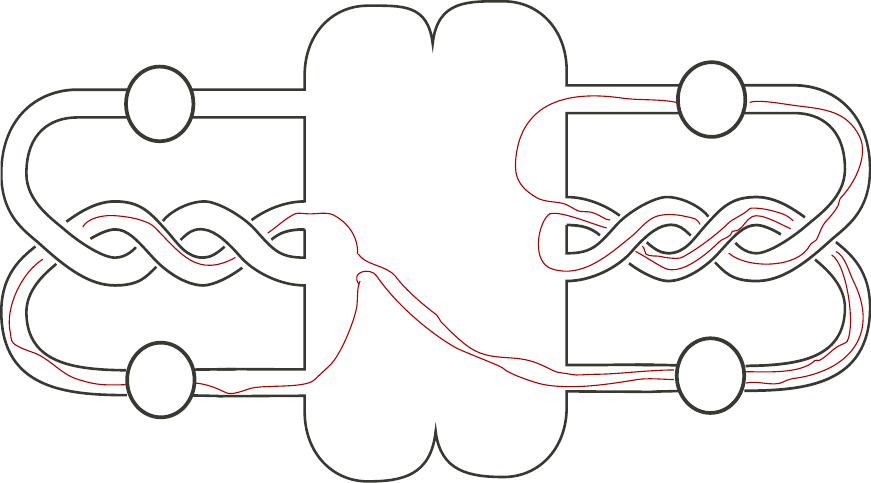} 

\caption{A schematic diagram of the proof of Theorem~\ref{thm:main}, illustrating the band sum that creates the surgery curve on $F \cs F^*$ having boundary $K \cs -K^r$.  In this example, $g_3(K) = 1$.  The thin (red) curve represents $\alpha \cs_b \alpha^*$ in the notation of the proof.}
\label{fig:schematic}
\end{figure}

Let $F$ be a minimal genus Seifert surface for $K$.  Denote its  genus by $g$.  In Figure~\ref{fig:schematic} the genus of $F$ is 1.   Let  $\alpha$ be an oriented simple closed curve on $F$ representing a nontrivial homology class.   We will also think of $\alpha$ as representing a knot. (In the figure, $\alpha$ has the knot type of $J_1$.)
 
 There is a natural choice of Seifert surface for $K \cs -K^r$, which we will denote $F \cs F^*$.     We denote by $\alpha^*$ the curve of $F^*$ that corresponds to $\alpha$.  One must specify  an orientation on $\alpha^*$, but that  choice will not be relevant in the proof.  What is essential is the elementary observation that with some orientation, $\alpha^* = - \alpha$ as a knot.
 
   If $V$ is the Seifert form for $ F \cs F^*$, then it is straightforward to show  $V(\alpha, \alpha) = -V(\alpha^*, \alpha^*)$.   Notice that reversing a knot on a Seifert surface  does not change its  self-linking.

 Consider the abstract  surface $\widehat{G}$ formed by cutting $F \cs F^*$ along $\alpha$ and $\alpha^*$.  It has five boundary components:  one that corresponds to 
 $K \cs -K^r$; a pair   $\{\alpha_1$, $\alpha_2\}$ corresponding to the $\alpha$ cut, and a pair $\{\alpha^*_1$,  $\alpha^*_2\}$  that corresponds to the $\alpha^*$ cut.  Choose simple paths $\gamma_{i}$ from $\alpha_1$ to  $\alpha^*_i$.  Each  $\gamma_{i}$, viewed as a curve in $S^3$, can be  used to form the band connected sum of $\alpha$  and $\alpha^*$ with the band in $F \cs F^*$.  Orientations can be chosen so that one is  $\alpha \cs_b  \alpha^*$ and the other is  $\alpha \cs_b  \alpha^{*r}.$    Thus, we can choose one of the bands to find $\alpha \cs_b -\alpha$ embedded on $F \cs F^*$ as a homologically nontrivial curve.
 
 The curve $\alpha \cs_b -\alpha$ has Seifert framing 0. If it is slice, then the Seifert surface $F \cs F^*$ can be surgered in the four-ball to yield a surface of genus  $2g-1$ bounded by $K \cs -K^r$ in $B^4$.  The proof can now be completed in one of two ways.   One option is to observe that for a natural choice of band,   $\alpha \cs_b -\alpha = \alpha \cs -\alpha$.  As an alternative, one can  use  the following result of Miyazaki, for which we include a short summary proof.  
 \begin{lemma}\label{lemma:1} If $\{K,  J\}$ is a split two component link, then the concordance class of the band connected sum  $K \cs_b  J$ is independent of the choice of band.
 \end{lemma}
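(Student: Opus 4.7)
The plan is to show that every band sum $K \cs_b J$ is concordant to the standard connected sum $K \cs J$; by transitivity, applying this to $b = b_1$ and $b = b_2$ then gives the lemma. To set things up, choose a separating $2$-sphere $S \subset S^3$ for the split link $\{K,J\}$, bounding balls $B_K \supset K$ and $B_J \supset J$. Let $b_0$ be a ``standard'' band meeting $S$ in a single arc, so that $K \cs_{b_0} J = K \cs J$.

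After isotoping $b$ to meet $S$ transversely, I may assume $b \cap S$ consists of $n$ arcs running between the two long sides of $b$; since $b$ connects $K$ to $J$, $n$ is odd. I induct on $n$. When $n = 1$, $b$ is isotopic to $b_0$ keeping $K \sqcup J$ fixed (using the split structure to rearrange the portions of $b$ inside $B_K$ and $B_J$ independently), and the band sums agree up to isotopy.

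For $n \ge 3$, pick two adjacent arcs of $b \cap S$ cobounding a sub-rectangle $R \subset b$ that lies entirely in one of the balls, say $B_K$. Since $R$ is disjoint from $J$, perform a saddle move on $K \cs_b J$ using a small band near $R$ that ``bypasses'' $R$ across $S$. The output is a $2$-component link: one component is a new band sum $K \cs_{b'} J$ with $|b' \cap S| = n-2$, and the other is a circle that, because $R \subset B_K$ avoids $J$, is an unknot bounding a disk in $B^4$ (obtained by pushing $R$ slightly into the $4$-ball). Capping off this unknot converts the saddle cobordism into a concordance from $K \cs_b J$ to $K \cs_{b'} J$. The induction then yields that $K \cs_b J$ is concordant to $K \cs J$.

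The main technical obstacle is verifying that the split-off circle admits a slice disk in $B^4$ disjoint from $K \cs_{b'} J$, so that the combined cobordism is a genuine concordance (annulus) rather than a higher-genus surface. This is exactly where the split hypothesis enters: since $R$ lies in the ball $B_K$ disjoint from $J$, a pushoff of $R$ into $B^4$ caps off the split circle without meeting the portions of $K \cs_{b'} J$ coming from $J$. Without splitness, $R$ could be entangled with the other component, and the capping disk would fail to give a concordance.
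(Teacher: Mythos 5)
Your induction has two genuine gaps, and each occurs at a step you treat as routine. First, the base case. When $b$ meets $S$ in one arc, you assert that $b$ is isotopic to the standard band $b_0$ keeping $K\sqcup J$ fixed, ``using the split structure to rearrange the portions of $b$ inside $B_K$ and $B_J$ independently.'' This is unjustified: the half-band $b\cap B_K$ is attached to $K$ and may be knotted in $B_K$ or wind around $K$, and splitness gives you no way to straighten it rel $K$. In fact, when $n=1$ the sphere $S$ decomposes $K\cs_b J$ as a connected sum of a band sum of $K$ with a split unknot and a band sum of $J$ with a split unknot, so the $n=1$ case already contains the full content of the lemma; it cannot be disposed of by an isotopy claim.

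Second, the inductive step is not realizable as a single saddle move, and the genus count then fails. To bypass the subrectangle $R$ you must sever the band along both adjacent arcs $a_1,a_2$ and re-fuse the two stubs by a shortcut square near $S$: that is three saddles (two fissions and one fusion). Indeed, passing from $K\cs_b J$ to $K\cs_{b'}J\,\sqcup\, c$ changes the number of components by one, so it needs an odd number of saddles, and a single saddle is impossible because the transition inserts four new arcs (the two sides of the shortcut square and the two arcs closing up $c\approx\partial R$) while a saddle inserts only two. With three saddles plus the cap on the unknot $c$ the cobordism from $K\cs_b J$ to $K\cs_{b'}J$ has Euler characteristic $-2$, i.e.\ genus one per step, so iterating gives a bounded-genus cobordism, not a concordance. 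The single-saddle moves that do exist near $R$ either simply undo the band sum, yielding the split link $K\sqcup J$ (neither component of which can be capped), or split off an unknot while leaving a component that is not of the form $K\cs_{b'}J$ with $|b'\cap S|=n-2$, so the induction does not close. (A side remark: the cappability of $c$ comes from $R$ being a subrectangle of the embedded band, hence disjoint from both $K$ and $J$; it is not where splitness enters.) For comparison, the paper's proof avoids any band-by-band reduction: it shows $(K\cs_b J)\cs-(K\cs J)$ is slice by sliding $-K$ over the band to identify it with $(K\cs-K)\cs_b(J\cs-J)$ and then performing the single band move dual to $b$, which by splitness produces a split link of two slice knots.
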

 
 \begin{proof} The connected sum of knots is a special case of the band connected sum, so the result will be proved by showing that  $(K \cs_b  J) \cs -(K \cs J)$ is slice.  In this sum, one can slide  the knot $-K$  over the band  to see that  
 \[(K \cs_b  J) \cs -(K \cs J)   \cong 
  (K \cs -K) \cs_b (J \cs -J).
  \]
 It is now evident that this knot is slice: a band move (dual to the band $b$) splits it into two components forming an split link consisting of  $K \cs -K$ and $J \cs -J$, both of which are slice.
 \end{proof}


 \section{Generalizations}\label{section:exproof1}
 
 The proof of Theorem~\ref{thm:main} could have been based on the following lemma, which can also be applied in some cases to reduce the genus of the bounding surface to be less than $2g_3(K) -1$.  The proof of Lemma~\ref{lemma:2} is much the same as that of Lemma~\ref{lemma:1} so we do not include it here.  Similarly, the proof of Corollary~\ref{corollary:1} follows along the lines of that of Theorem~\ref{thm:main} and Corollary~\ref{corollary:main}.
 
  \begin{lemma}\label{lemma:2}  Suppose that $\{J_1, \ldots ,J _n\}$ and $\{J'_1, \ldots , J'_n\}$ are split links.  Then for any set of bands $\{b_i\}$ joining each $J_i$ to $J_i'$, the band  connected sum is  concordant to the split link with compoments $J_i \cs J_i'$.
 
  \end{lemma}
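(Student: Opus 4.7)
The plan is to adapt the argument of Lemma~\ref{lemma:1} component by component, using the splitness hypothesis to ensure the individual concordances can be carried out in pairwise disjoint regions of $S^3 \times I$.

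First, I would use the splitness to place the full $2n$-component collection $\{J_1,\ldots,J_n,J'_1,\ldots,J'_n\}$ so that each pair $(J_i, J'_i)$ sits inside its own three-ball $B_i \subset S^3$, with the $B_i$ pairwise disjoint, and after a further isotopy arrange each band $b_i$ to lie entirely inside $B_i$. This uses the splitness of each family $\{J_i\}$ and $\{J'_i\}$ to pull the two components of each pair into a common ball without introducing linking with any other pair.

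Next, inside each $B_i$ the pair $\{J_i, J'_i\}$ is a split two-component link with a connecting band $b_i$, so the calculation of Lemma~\ref{lemma:1} applies verbatim to that pair: the knot
\[
(J_i \cs_{b_i} J'_i) \cs -(J_i \cs J'_i)
\]
slides over $b_i$ to $(J_i \cs -J_i) \cs_{b_i} (J'_i \cs -J'_i)$, which then splits under the dual band move into the slice knots $J_i \cs -J_i$ and $J'_i \cs -J'_i$. Translating this back into the concordance picture yields a concordance annulus $A_i \subset B_i \times I$ from $J_i \cs_{b_i} J'_i$ to $J_i \cs J'_i$.

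Finally, because the four-balls $B_i \times I$ are pairwise disjoint inside $S^3 \times I$, the annuli $A_i$ are automatically disjoint, and their union is the desired link concordance from the band connected sum to the split link with components $J_i \cs J'_i$. The only real content beyond Lemma~\ref{lemma:1} lies in the first step: arranging each triple $(J_i, J'_i, b_i)$ to sit inside its own ball with the balls pairwise disjoint. That step is what upgrades independent knot-level slice disks into a genuine link concordance, and it is where the hypothesis that both families are split is used in an essential way.
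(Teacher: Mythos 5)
Your reduction hinges entirely on the first step, and that step is unjustified and in general false: you cannot isotope each band $b_i$ into a ball $B_i$ containing $J_i\cup J'_i$ with the $B_i$ pairwise disjoint. The hypothesis controls only the closed curves $J_1,\ldots,J_n,J'_1,\ldots,J'_n$; the bands are arbitrary, and they may be essentially entangled with the other pairs and with each other --- a band $b_1$ may enter the ball containing $J_2$ and wind around $J_2$ there, a component $J_2$ may encircle the two parallel strands running along $b_1$, and so on. If your isotopy existed, the fused link $\bigl\{J_i \cs_{b_i} J'_i\bigr\}_i$ would always be a \emph{split} link, and it need not be: for example, with all four $J$'s unknots, take $b_1$ to be the fusion band of a standard ribbon presentation of a nontrivial ribbon knot and take $J_2$ to be a small circle encircling the middle of $b_1$ (with $J'_2$ a tiny split unknot and $b_2$ a short trivial band); the hypotheses hold, since $J_2$ bounds a disk meeting only the interior of $b_1$, yet the fused link consists of a knotted component together with a curve enclosing two of its strands and is in general not split. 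The conclusion of the lemma is a concordance statement precisely because such configurations cannot be separated by an isotopy, so your first step is in effect assuming an isotopy-level strengthening of what is to be proved, and no argument is given for it ("after a further isotopy" is asserted, not shown).

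The argument the paper intends, parallel to Lemma~\ref{lemma:1}, does not try to separate the banded data first. One ties the local knot $-(J_i \cs J'_i)$ into the $i$-th component, slides the local summands along that component (hence along the sides of $b_i$) so that $-J_i$ sits beside $J_i$ and $-J'_i$ beside $J'_i$, and then performs the $n$ saddle moves dual to the bands $b_i$, all in disjoint small balls. Each component splits into $J_i \cs -J_i$ and $J'_i \cs -J'_i$, the latter carrying a tongue running along $b_i$ which retracts along the band because the bands are disjoint from all other components and from one another. At that point --- and only at that point --- the splitness hypothesis enters: since the original $2n$ curves lie in disjoint balls, the resulting $2n$-component link is split with slice components, so it can be capped by disjoint slice disks, and one converts this, as in the knot case, into the desired link concordance from the fused link to the split link with components $J_i \cs J'_i$. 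So your instinct that disjoint balls should produce disjoint annuli is right, but the disjoint balls become available only after the dual band moves, not before.
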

  
  \begin{corollary}\label{corollary:1} Suppose that $g_3(K) = g_4(K) \ne 0$ and that $F$ is a minimum genus Seifert surface for $K$.  If a rank $k$ subgroup of $H_1(F)$ is represented by a set of $k$ disjoint simple closed curves on $F$ that form a split link, then $d(K, K^r) \le 2g_4(K) - k$.
  \end{corollary}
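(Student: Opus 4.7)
The plan is to run the construction from the proof of Theorem~\ref{thm:main} simultaneously on all $k$ curves, using Lemma~\ref{lemma:2} in place of Lemma~\ref{lemma:1}. Let $F$ be a minimum-genus Seifert surface for $K$, so $g(F) = g_4(K) = g$, and let $\alpha_1, \ldots, \alpha_k$ be the given disjoint simple closed curves on $F$, forming a split link and representing a rank-$k$ subgroup of $H_1(F)$. On $F \cs F^*$ consider also the mirror curves $\alpha_1^*, \ldots, \alpha_k^*$, where $\alpha_i^* = -\alpha_i$ as a knot.

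First I would note that the combined $2k$-component link $\{\alpha_i\} \cup \{\alpha_j^*\}$ in $S^3$ is split, since $F$ and $F^*$ may be isotoped into disjoint $3$-balls and each subfamily is split on its side. Then, working on $F \cs F^*$, I would choose $k$ pairwise disjoint bands $b_1, \ldots, b_k$, with $b_i$ joining $\alpha_i$ to $\alpha_i^*$; these exist because the $2k$ curves are non-separating on their respective sides, so the complement of their union in $F \cs F^*$ remains connected and has room for $k$ disjoint connecting arcs which can be thickened.

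Performing the band sums produces a $k$-component link $L = \{\alpha_i \cs_{b_i} \alpha_i^*\}_{i=1}^k$ embedded on $F \cs F^*$. By Lemma~\ref{lemma:2}, $L$ is concordant to the split link $\{\alpha_i \cs (-\alpha_i)\}$, which is visibly slice, so $L$ itself is slice. As in the proof of Theorem~\ref{thm:main}, the Seifert-form identity $V(\alpha_i, \alpha_i) + V(\alpha_i^*, \alpha_i^*) = 0$ shows that each component of $L$ has Seifert framing $0$ on $F \cs F^*$, and the classes $[\alpha_i] + [\alpha_i^*]$ are linearly independent in $H_1(F \cs F^*) \cong H_1(F) \oplus H_1(F^*)$.

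Finally, using pairwise disjoint slice disks for $L$ in $B^4$, I would surger $F \cs F^*$ (pushed slightly into $B^4$) along these disks, exactly as in the rank-one case; homological independence of the surgery curves makes each surgery genus-reducing, so the result is a surface of genus $2g - k = 2g_4(K) - k$ in $B^4$ with boundary $K \cs -K^r$. This yields $d(K, K^r) \le 2g_4(K) - k$. I expect the only delicate point to be verifying the disjointness of the bands and of the corresponding slice disks, but these are the same general-position arguments that silently underlie the $k=1$ proof of Theorem~\ref{thm:main}.
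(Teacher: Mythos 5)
Your outline is exactly the argument the paper intends (the paper only says the proof ``follows along the lines'' of Theorem~\ref{thm:main}, using Lemma~\ref{lemma:2}): band-sum each $\alpha_i$ to $\alpha_i^*$ inside $F \cs F^*$, note the resulting curves are homologically independent with Seifert framing $0$, apply Lemma~\ref{lemma:2} to see the link is slice, and compress along disjoint slice disks to drop the genus by $k$. The split-ness of the full $2k$-component link, the existence of $k$ disjoint bands (connectivity of the cut surface via homological independence), and the genus count are all fine.

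The one genuine gap is the orientation of $\alpha_i^*$, which you treat as fixed (``$\alpha_i^* = -\alpha_i$ as a knot'') and then decouple from the choice of band. In fact $\alpha_i^*$ is only $-\alpha_i$ for one of its two orientations; with the other it is the mirror $m(\alpha_i)$ without reversal, and the orientation that the band-summed curve on $F \cs F^*$ induces on $\alpha_i^*$ is determined by the band, not chosen freely. So for an arbitrary band $b_i$, Lemma~\ref{lemma:2} only gives concordance to $\alpha_i \cs \alpha_i^*$ with the band-induced orientation, which may be $\alpha_i \cs m(\alpha_i)$; that knot is slice precisely when $\alpha_i$ is concordant to its reverse, i.e.\ it can fail for exactly the kind of knots this paper is about. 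This is why the proof of Theorem~\ref{thm:main} cuts along the curves and distinguishes the two bands $\gamma_1,\gamma_2$, one yielding $\alpha \cs_b \alpha^*$ and the other $\alpha \cs_b \alpha^{*r}$: one of the two gives $\alpha \cs_b (-\alpha)$. The fix in your setting is immediate --- since the cut surface is connected, for each $i$ you may choose the arc to approach $\alpha_i^*$ from whichever side induces the orientation with $\alpha_i^* = -\alpha_i$, keeping all bands disjoint --- but this choice needs to be made explicitly rather than subsumed under general position.
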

  
  As an example  we use pretzel knots, which include the first knots shown to not be reversible~\cite{MR0158395}.

  \begin{example} Consider the $n = 2k +1 $ stranded pretzel knot, with $n$ odd: $P = P(p_1, \ldots , p_n)$.  Assume that $p_i >0$ and each $p_i$ is odd.  Then $g_3(P) = g_4(P) = k$ and $d(P, P^r) \le k$.  To prove  this, one considers the Seifert matrix $V_P$ of dimension $2k \times 2k$.  Since the $p_i$ are positive, the matrix is easily seen to be diagonally dominant, and thus the signature satisfies $\sigma(P) = 2k$.  It follows that $g_4(P) \ge k$, but clearly $g_3(P) \le k$.  Using the fact that $g_4(P) \le g_3(P)$ leads to the observation that $g_3(P) = g_4(P) = k$.   There is an obvious set of $k$ curves on the standard Seifert surface for $P$ that represent independent classes in homology, and these form an unlink. 
  \end{example}

  \begin{example} More complicated examples can be built from the knot $P(p_1, \ldots , p_n)$   by tying arbitrary knots in the bands of its canonical Seifert surface.
  \end{example}


 \section{Examples of $g_4(K \cs -K^r) = g_4(K)$.}\label{section:examples}
 
 In this section we prove Theorem~\ref{thm:g4}.  Our approach modifies examples built in~\cite{MR2195064}, and again our proof depends on results
  of Gilmer~\cite{MR656619}  that demonstrated that Casson-Gordon invariants offer bounds on the four-genus of knots.  The result in~\cite{MR2195064} is sufficient to find knots for which $g_4(K \cs -K^r) \ge  g_4(K)$; the added feature here is that for the current examples the four-genus $g_4(K)$ is exactly identified.
 
Figure~\ref{fig:knot} is a schematic representation of a knot we denote $K(A,B)$.  In the figure there is a  link $(K, \alpha, \beta)$.  If we replace  neighborhoods of  $\alpha$ and $\beta$ with the complements of knots $A$ and $B$ (identifying the meridian and longitudes of $A$ and $B$ with the longitudes and meridians of $\alpha$ and $\beta$, respectively), the resulting  manifold is diffeomorphic to $S^3$.   The knot $K$ now represents the knot we call $K(A,B)$.  In effect, the knots $A$ and $B$ are being tied in the two bands of the obvious Seifert surface.

 \begin{figure}[h]
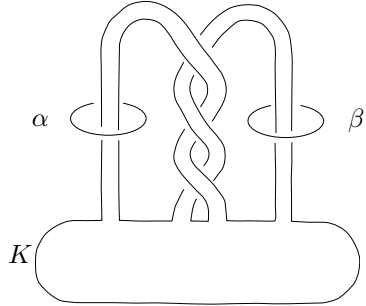

\labellist
\pinlabel $K$ at -30 100
\pinlabel $\alpha$ at 10 365
\pinlabel $\beta$ at 640 365
\endlabellist
\fig{.2 }{knot-D}  
\caption{Knot $K(A,B)$}
\label{fig:knot}
\end{figure}

We will prove the following.

\begin{theorem}  For every $n \ge 0$ there is a pair of knots $A$ and $B$ such that \[g_4(nK(A,B)) = n = g_4( nK(A,B) \cs -nK(A,B)^r) .\]
\end{theorem}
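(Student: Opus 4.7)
The plan is to combine two easy upper bounds with a Casson--Gordon lower bound, following Gilmer~\cite{MR656619} and adapting the construction of~\cite{MR2195064}. There are essentially four inequalities to verify: $g_4(nK(A,B)) \le n$ and $g_4(nK(A,B) \cs -nK(A,B)^r) \le n$ on the one hand, and $g_4(nK(A,B)) \ge n$ and $g_4(nK(A,B) \cs -nK(A,B)^r) \ge n$ on the other.

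For the upper bounds, the obvious Seifert surface in Figure~\ref{fig:knot} has genus one, so $nK(A,B)$ bounds a genus-$n$ surface and $g_4(nK(A,B)) \le n$. For the mirror-reverse sum, observe that in $\calc$ the knot $nK(A,B) \cs -nK(A,B)^r$ is the $n$-fold connected sum of $K(A,B) \cs -K(A,B)^r$; provided $A$ and $B$ are chosen so that $K(A,B)$ is nontrivial, Theorem~\ref{thm:main} yields $g_4(K(A,B) \cs -K(A,B)^r) < 2g_3(K(A,B)) \le 2$, hence at most $1$. Summing over $n$ copies gives $g_4(nK(A,B) \cs -nK(A,B)^r) \le n$.

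For the lower bounds, I would invoke Gilmer's theorem: if a knot $J$ has a genus-$g$ Seifert surface $F$ and bounds a surface of genus $h$ in $B^4$, then $H_1(F)$ contains a subgroup $M$ of rank at least $g-h$ that metabolizes the Seifert form and on which the Casson--Gordon signatures of prime-power characters of the cyclic branched covers of $J$ are bounded by a linear function of $h$. I would then choose $A$ and $B$ as in~\cite{MR2195064}, arranging that (i) the Alexander module of $K(A,B)$ has a restricted submodule structure, so that any metabolizer on the genus-$n$ surface of $nK(A,B)$ (or on the genus-$2n$ surface of $nK(A,B) \cs -nK(A,B)^r$) decomposes into identifiable summands coming from the individual copies; and (ii) the Casson--Gordon signatures attached to each candidate summand are nonzero and of a definite sign. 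Summing these contributions over the $n$ pieces then grows linearly in $n$ and rules out $h<n$.

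The main obstacle is to make a single pair $(A,B)$ handle both cases simultaneously. The bound on $nK(A,B)$ only demands a non-trivial metabolizer, but the mirror-reverse sum forces a metabolizer of rank at least $n+1$ in a Seifert surface of genus $2n$, and such a metabolizer can mix summands from the $K(A,B)$ side with summands from the $-K(A,B)^r$ side. The critical computational input is that the Casson--Gordon signatures in play are insensitive to reversing the orientation of $K(A,B)$---exactly the phenomenon exploited in~\cite{MR2195064}---so every summand contributes its obstruction with the same sign no matter which side of the mirror sum it sits on. The refinement over~\cite{MR2195064} is then just that the same computation, applied directly to $nK(A,B)$, also pins down its own four-genus.
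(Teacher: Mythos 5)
Your overall architecture matches the paper's: the upper bounds are obtained exactly as in the paper (genus-one Seifert surface for $K(A,B)$, plus Theorem~\ref{thm:main} for the mirror-reverse summand), and the lower bounds are to come from Gilmer's Casson--Gordon four-genus bound together with the argument of~\cite{MR2195064}. The gap is in how you combine the contributions in the mixed metabolizer. Your key claim --- that because the invariants are insensitive to reversal, ``every summand contributes its obstruction with the same sign no matter which side of the mirror sum it sits on,'' so it suffices to take the signatures of $A$ and $B$ nonzero and of a definite sign --- is not justified and is contradicted by the form the obstruction actually takes. Reversal indeed preserves the Casson--Gordon data, but it interchanges the roles of the two bands (i.e.\ of $A$ and $B$), and the concordance inverse $-K(A,B)^r$ introduces a sign; the net effect (Theorem~\ref{thm:general}) is that when $g_4\big(n(K(A,B)\cs -K(A,B)^r)\big)=g<n$ one only gets integers $0\le a,b\le n$, not both zero, with
\[
\big| a\big(\sigma_{1/7}(A)+\sigma_{2/7}(A)+\sigma_{3/7}(A)\big) - b\big(\sigma_{1/7}(B)+\sigma_{2/7}(B)+\sigma_{3/7}(B)\big)\big| \le 6g .
\]
This is a \emph{difference}, and it can vanish (e.g.\ $A=B$, $a=b$) even when both signature sums are large and of the same sign, so ``nonzero of a definite sign'' rules out nothing. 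A similar quantitative issue arises already for $nK(A,B)$: Gilmer's bound (Theorem~\ref{thm:12}) contains the unknown but bounded pattern terms $\sum_i\sigma_\lambda\tau(K,\chi_i)$, so the signatures of $A$ and $B$ must be chosen to dominate those terms and each other, not merely to be nonzero.

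The repair is the hierarchical choice the paper makes, using~\cite{MR2054808} to realize the values of $\sigma_{1/3}$ and of $\sigma_{1/7}+\sigma_{2/7}+\sigma_{3/7}$ independently and arbitrarily: first pick $A$ so that $2a\,\sigma_{1/3}(A)$ dominates all possible pattern terms for $0<a\le n$ and so that $\big|\sigma_{1/7}(A)+\sigma_{2/7}(A)+\sigma_{3/7}(A)\big|>6n$; then pick $B$ so that $\sigma_{1/3}(B)$ dominates the pattern terms plus the maximal possible $A$-contribution, and so that $\big|\sigma_{1/7}(B)+\sigma_{2/7}(B)+\sigma_{3/7}(B)\big|$ exceeds $n$ times the corresponding sum for $A$ plus $6n$. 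With these size conditions no choice of $0\le a,b\le n$ (not both zero) can produce cancellation, and both lower bounds follow. Without some such asymmetric largeness condition on $(A,B)$, your argument does not exclude $g<n$ for the mirror-reverse sum.
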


The difficult work is in showing that $n$ is a lower bound for both invariants.  We begin with the simple fact that it is an upper bound.

\begin{lemma} For any choice of $A$ and $B$,  $g_4( n K(A,B) )\le  n$ and  $g_4\big( n  K(A,B) \cs -nK(A,B)^r \big)\le  n$.
\end{lemma}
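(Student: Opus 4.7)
The plan is to exhibit, for each pair $(A,B)$, an obvious genus-one Seifert surface for $K(A,B)$, and then combine this with Theorem~\ref{thm:main} and the subadditivity of the four-genus under connected sum.

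First I would observe that the link $(K,\alpha,\beta)$ of Figure~\ref{fig:knot} is arranged so that $K$ bounds a genus-one Seifert surface $F_0$ in whose two bands the circles $\alpha$ and $\beta$ run once each. Performing the infection $K \to K(A,B)$ simply ties the knots $A$ and $B$ into these bands, producing a Seifert surface $F$ for $K(A,B)$ of genus one. Therefore $g_3(K(A,B)) \le 1$ and hence $g_4(K(A,B)) \le 1$ for every choice of $A$ and $B$. Since the four-genus is subadditive under connected sum,
\[
g_4(nK(A,B)) \le n\, g_4(K(A,B)) \le n,
\]
which proves the first inequality.

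For the second inequality I would first rewrite the knot in question, using commutativity of connected sum, as
\[
nK(A,B) \cs -nK(A,B)^r \;\cong\; n\bigl(K(A,B)\cs -K(A,B)^r\bigr).
\]
If $K(A,B)$ happens to be the unknot there is nothing to prove, so assume it is nontrivial. Applying Theorem~\ref{thm:main} to $K(A,B)$ gives
\[
d\bigl(K(A,B),K(A,B)^r\bigr) \;<\; 2g_3(K(A,B)) \;\le\; 2,
\]
so $g_4\bigl(K(A,B)\cs -K(A,B)^r\bigr) = d(K(A,B),K(A,B)^r) \le 1$. Applying subadditivity again,
\[
g_4\bigl(nK(A,B) \cs -nK(A,B)^r\bigr) \;\le\; n\, g_4\bigl(K(A,B)\cs -K(A,B)^r\bigr) \;\le\; n.
\]

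There is essentially no obstacle here: both inequalities follow formally once one identifies the genus-one Seifert surface for $K(A,B)$ from the schematic picture and invokes Theorem~\ref{thm:main}. The real content of the section lies in the matching lower bounds, which will require the Casson--Gordon machinery of Gilmer~\cite{MR656619}; the present lemma is simply the easy half that records the upper bound.
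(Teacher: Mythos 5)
Your proposal is correct and follows essentially the same route as the paper: identify the genus-one Seifert surface to get $g_3(K(A,B)) \le 1$, apply Theorem~\ref{thm:main} to bound $g_4\bigl(K(A,B) \cs -K(A,B)^r\bigr)$ by $1$, and conclude both inequalities by subadditivity of $g_4$ under connected sum. The only cosmetic difference is your explicit handling of the (vacuous) unknot case, which the paper sidesteps by asserting $g_3(K(A,B)) = 1$ outright.
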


\begin{proof} We have that $g_3\big(K(A,B)\big) =1$.  Thus $g_4\big(  K(A,B)\big) \le 1$ and it follows that $g_4\big(nK(A,B)\big) \le  n$.  By Theorem~\ref{thm:main}, $g_4\big( K(A,B) \cs -K(A,B)^r\big) \le 1$, and thus   $g_4\big( nK(A,B) \cs -nK(A,B)^r)\big) \le n$, 
\end{proof}

The remainder of this section is devoted to reversing these inequalities.

 \subsection{Signature bounds related to $g_4\big( n K(A,B)\big)$}

In~\cite{MR656619} Gilmer considered the case of $A = J = B$ for some $J$.    However, his result~\cite[Theorem 1]{MR656619} can be applied directly to achieve the next result.

\begin{theorem}\label{thm:12} If $g_4\big(nK(A,B) \big) = g < n$, then for some $a$ and $b$ satisfying $0 \le a \le n$, $0 \le b \le n$ and at least one of $a$ and $b$ are nonzero, one has:
 \[ \big| \sum_{i=1}^{n} \sigma_\lambda\tau(K, \chi_i) +
  2a\sigma_{  1/3}(A) +2b\sigma_{  1/3}(B) \big| \le 4g. \] 
\end{theorem}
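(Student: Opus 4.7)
The plan is to imitate the argument of [MR656619, Theorem 1] (Gilmer's Casson--Gordon bound on the four-genus) and then insert the Litherland satellite formula to track the infection contributions from $A$ and $B$. Write $L = nK(A,B)$ and let $F_L$ be its obvious genus-$n$ Seifert surface, obtained by boundary-connect-summing $n$ copies of the standard genus-one Seifert surface $F$ of $K(A,B)$. On $H_1(F_L;\Z)$ there is a natural symplectic basis $\{a_i,b_i\}_{i=1}^{n}$, where $a_i$ and $b_i$ are the pushed-in copies of $\alpha$ and $\beta$ in the $i$-th summand. As knots in $S^3$, $a_i$ is isotopic to $A$ and $b_i$ is isotopic to $B$.

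First I would invoke Gilmer's theorem: the hypothesis $g_4(L)\le g<n$ forces the existence of a rank-$(n-g)$ direct summand $H \subset H_1(F_L)$ on which the Seifert form vanishes, together with a prime-power-order character $\chi$ on the double branched cover $\Sigma_2(L)$ that vanishes on the image of $H$ and satisfies
\[
|\sigma_\lambda\tau(L,\chi)| \le 4g.
\]
Exactly as in Gilmer's proof, $\chi$ arises from the metabolizer for the linking form on $H_1(\Sigma_2(L))$ dual to $H$, and there is enough freedom to insist that $\chi$ has order $3$; this uses that one is free to choose the base knot $K$ (equivalently, its Seifert form and hence the determinant) so that $H_1(\Sigma_2(L))$ contains a $3$-primary part large enough to produce such a $\chi$.

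Next I would use the Litherland satellite formula for Casson--Gordon signatures. Since $L$ is obtained from $nK$ by infecting along the $2n$ curves $\{a_i, b_i\}$ using $A$ and $B$ respectively, that formula gives
\[
\sigma_\lambda\tau(L,\chi) = \sum_{i=1}^{n} \sigma_\lambda\tau(K,\chi_i) + \sum_{i=1}^{n} 2\sigma_{\omega_i}(A) + \sum_{i=1}^{n} 2\sigma_{\eta_i}(B),
\]
where $\chi_i$ is the restriction of $\chi$ to the $i$-th $K$-summand and $\omega_i,\eta_i$ are the values that $\chi$ takes on the meridians of $a_i, b_i$. Because $\chi$ has order $3$, each $\omega_i$ and $\eta_i$ is either trivial or a primitive cube root of unity, and the identity $\sigma_{\bar\omega}=\sigma_\omega$ collapses the sum to
\[
\sigma_\lambda\tau(L,\chi) = \sum_{i=1}^{n}\sigma_\lambda\tau(K,\chi_i) + 2a\,\sigma_{1/3}(A) + 2b\,\sigma_{1/3}(B),
\]
where $a$ and $b$ count, respectively, the number of indices $i$ with $\omega_i \neq 1$ and $\eta_i \neq 1$; in particular $0\le a,b\le n$. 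Combining with Gilmer's bound yields the stated inequality.

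The main obstacle is showing that at least one of $a,b$ is nonzero, i.e.\ that the character produced by Gilmer's theorem genuinely detects the infections. This is the one place where the argument is not formal: the rank-$(n-g)$ metabolizer $H$ imposes only $n-g$ linear conditions on the $2n$-dimensional space of order-$3$ characters coming from the individual $K$-summands, so a pigeonhole/linear-algebra count (which is the core of the original Gilmer argument) produces a $\chi$ whose evaluation on the meridian of some $a_i$ or $b_i$ is nontrivial. Once such a $\chi$ is in hand, the two displayed formulas combine to give the claimed bound, completing the proof.
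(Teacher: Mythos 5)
Your overall route is the one the paper intends: the paper's entire proof of this theorem is the remark that Gilmer's Theorem 1 of~\cite{MR656619} ``can be applied directly,'' and what that means in practice is exactly what you describe --- the four-genus hypothesis produces a rank-$(n-g)$ null subspace of the Seifert form on the genus-$n$ Seifert surface of $nK(A,B)$, hence admissible nontrivial prime-power characters on $\Sigma_2\big(nK(A,B)\big)$ with $|\sigma_\lambda\tau| \le 4g$, and the Litherland/Gilmer satellite additivity then splits the invariant into the base terms $\sum_i \sigma_\lambda\tau(K,\chi_i)$ plus companion terms, the factor $2$ coming from the two (conjugate) lifts of each infection curve in the double branched cover.

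Two points need repair. First, you cannot ``choose the base knot $K$'' inside this proof: the theorem is about the fixed knot of Figure~2, and the reason order-$3$ characters are available is that this $K$ was built so that $H_1\big(\Sigma_2(K)\big)$ has the requisite $3$-torsion (infection by $A$ and $B$ does not change this homology); your argument should invoke that property of the fixed $K$, and then note that since $g<n$ the vanishing conditions imposed by the null subspace leave a nontrivial character in the $3$-primary part --- Gilmer's bound applies to it because his theorem covers all admissible prime-power characters, not because one can re-engineer $K$ to force order $3$. Second, the nontriviality of $a$ or $b$ is not the informal pigeonhole step you describe: the relevant quantities are the values of $\chi$ on the two lifts of each $\alpha_i$ and $\beta_i$ in the branched cover (not on meridians), and these lifted curves generate $H_1\big(\Sigma_2(nK(A,B))\big)$; hence any nontrivial admissible character is automatically nontrivial on some lift, which is precisely the statement that $a+b\ge 1$. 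With those corrections your outline matches the argument the paper is delegating to Gilmer.
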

We will not define the quantity  $\sigma_\lambda\tau(K, \chi_i)$; all that is needed is that it is independent of the choice of $A$ and $B$, that there are only a finite number of possible $\chi_i$ that can arise, and that for each $\chi$, the values of $\sigma_\lambda\tau(K, \chi_i)$ are uniformly bounded by some constant as functions of $\lambda \in S^1 \subset \C$.

 \subsection{Signature bounds related to $g_4\big( n  K(A,B) \cs -nK(A,B)^r\big)$}

 The proof of the main theorem in~\cite[Section 5]{MR2195064} yields the following result, stated in terms of the classical Levine-Tristram signatures.

\begin{theorem}\label{thm:general}   If  $g_4\big(n(K(A,B) \cs -K(A,B)^r)\big) = g < n$, then there exists a pair of integers $a$ and $b$ satisfying $0 \le a \le n$, $0 \le b \le n$, and at least one of $a$ and $b$ are positive, such that 

\[\big| a\big(\sigma_{1/7}(A) + \sigma_{2/7}(A) +\sigma_{3/7}(A)\big) - b\big(\sigma_{1/7}(B) + \sigma_{2/7}(B) +\sigma_{3/7}(B)\big)\big| \le 6g.\]
\end{theorem}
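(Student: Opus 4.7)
The plan is to adapt the Casson-Gordon obstruction method of Gilmer~\cite{MR656619} that powers the main argument of \cite[Section 5]{MR2195064}. Assume $g_4\big(n(K(A,B) \cs -K(A,B)^r)\big) = g < n$. Let $F$ be the natural genus $2n$ Seifert surface built by plumbing $n$ copies of the band surface of $K(A,B)$ with $n$ copies of the band surface of $-K(A,B)^r$. A basis for $H_1(F;\Z)$ is given by the band curves $\{a_i, b_i\}_{i=1}^n$ on the $K(A,B)$-summands and $\{a^*_i, b^*_i\}_{i=1}^n$ on the $-K(A,B)^r$-summands; in this basis the Seifert form splits as $n\cdot V_K \oplus n\cdot (-V_K)$, where $V_K$ is the $2\times 2$ Seifert matrix of the genus one base knot.

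Gilmer's theorem~\cite[Theorem 1]{MR656619}, applied to the genus $g$ bounding surface in $B^4$, produces a subgroup $\Lambda \subset H_1(F;\Z)$ of rank at least $2n - g > n$ on which the Seifert form vanishes, together with a Casson-Gordon signature bound
\[\Bigl|\sigma_\omega\tau\bigl(n(K(A,B)\cs -K(A,B)^r), \chi\bigr) + \text{(metabolizer correction)}\Bigr| \le 4g\]
for every character $\chi$ of prime-power order on $H_1$ of the double branched cover trivial on the lift of $\Lambda$, with $\omega$ an appropriate root of unity. I would take $\omega$ to be a primitive $7$th root of unity and let $\chi$ run over characters of order $7$, summing the resulting inequalities over the three nonconjugate residues $\{1, 2, 3\} \subset \Z/7$, which upgrades the right-hand side from $4g$ to $6g$.

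A character of order $7$ is specified by assignments of residues in $\Z/7$ to the meridians of the band-knots $A, B, -A^r, -B^r$ across the $n$ summands. Because $\Lambda$ has rank strictly greater than $n$ in a $4n$-dimensional character lattice, a pigeonhole/linear algebra argument produces a single nonzero character (respecting the $\Lambda$-vanishing constraint) whose net $A$-meridian charge and net $B$-meridian charge reduce to integers $a, b \in \{0,\dots,n\}$, not both zero. Evaluating the Casson-Gordon invariant on this character, the base-knot contributions from the $K(A,B)$- and $-K(A,B)^r$-summands cancel against one another, and each $A$-band contributes $\sigma_{\alpha_i/7}(A)$ while each $-A^r$-band contributes $-\sigma_{\alpha_i^*/7}(A)$ via the identity $\sigma_\omega(-K^r) = -\sigma_\omega(K)$; the $B$ and $-B^r$ bands combine analogously. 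Collecting these local contributions and absorbing all terms independent of $A$ and $B$ into the genus bound gives the stated inequality.

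The hardest step is the character-selection one: the linear algebra inside $\Lambda$ must be arranged so that the extracted character gives rise to precisely the form $a\cdot S(A) - b\cdot S(B)$, where $S(J) = \sigma_{1/7}(J)+\sigma_{2/7}(J)+\sigma_{3/7}(J)$, rather than a more general linear combination involving independent contributions from the $A^r$ and $B^r$ meridians. This forces a careful bookkeeping of how the metabolizer condition on $\Lambda$ interacts with the block structure of $V_K \oplus -V_K$ and with the sign conventions coming from mirror and reverse, following the model computation carried out in \cite[Section 5]{MR2195064}.
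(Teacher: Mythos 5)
Your overall strategy---Gilmer's metabolizer/Casson--Gordon signature bound applied to the natural genus $2n$ Seifert surface, Litherland-type satellite contributions from the band knots, and signs coming from the fact that the bands of $-K(A,B)^r$ carry $-A^r$ and $-B^r$---is the same machinery this theorem rests on; the paper's own ``proof'' is nothing more than the remark that the argument of \cite[Section 5]{MR2195064} (where $A=J=-B$) carries over almost verbatim. As a standalone argument, however, your sketch fails at exactly the decisive points. First, the passage from $4g$ to $6g$ cannot work as you describe: summing three inequalities each bounded by $4g$ gives $12g$, not $6g$, so the constant in the statement is not obtained by your mechanism. Relatedly, your ``net meridian charge'' reduction is not valid, because $\sigma_{x/7}$ depends on the residue $x$: a character taking values $2$ and $5$ on two different summands is not interchangeable with one of total charge $0$, and a net charge $a$ does not produce the coefficient $a$ on the symmetric sum $\sigma_{1/7}+\sigma_{2/7}+\sigma_{3/7}$. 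What actually forces that symmetric sum with honest integer coefficients is an averaging over the three conjugacy classes of multiples of a single character: if a lifted band curve receives a nonzero value $\epsilon$, then as $c$ runs over $1,2,3$ the values $c\epsilon$ sweep out $\{1,2,3\}$ up to complex conjugation, so each band with nonzero character value contributes exactly $\sigma_{1/7}+\sigma_{2/7}+\sigma_{3/7}$ of its companion knot, independently of $\epsilon$; the coefficients $a,b$ are then counts of summands, not charges. Your proposal never arrives at this, so the specific form $a\bigl(\sigma_{1/7}(A)+\sigma_{2/7}(A)+\sigma_{3/7}(A)\bigr)-b\bigl(\sigma_{1/7}(B)+\sigma_{2/7}(B)+\sigma_{3/7}(B)\bigr)$ is not actually derived.

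Second, the step you yourself flag as hardest---choosing a character annihilating the image of the metabolizer so that the base-knot terms cancel, at least one of $a,b$ is nonzero, and the contributions assemble with the stated signs and ranges---is deferred to ``careful bookkeeping \dots following \cite[Section 5]{MR2195064}.'' But that bookkeeping is the entire content of the theorem: note in particular that the $A$-contributions from the $K(A,B)$-summands and from the $-K(A,B)^r$-summands enter with opposite signs, so a priori the coefficient of the $A$-term is a difference of two counts lying in $[-n,n]$, and it must be argued (as in the cited source) that one can arrange the conclusion in the stated form with $0\le a,b\le n$ and not both zero. In effect your proposal, at its crux, makes the same appeal to \cite{MR2195064} that the paper does, but the intermediate steps you do spell out (the $4g\to 6g$ arithmetic and the net-charge extraction) are incorrect as stated and would need to be replaced by the character-averaging and counting argument just described.
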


\begin{proof}  In the proof that appears in~\cite{MR2195064}, there is a knot $J$ such that $A = J = -B$.  The sum that appears is simpler, just a positive multiple of $ \sigma_{1/7}(J) + \sigma_{2/7}(J) +\sigma_{3/7}(J) $.  The proof however carries over almost verbatim in the current situation.
\end{proof}

\subsection{Putting the results together}
According to~\cite{MR2054808}, the values of the functions $\{\sigma_{1/7}, \sigma_{2/7}, \sigma_{1/7}, \sigma_{1/3}\}$ are linearly independent on the concordance group and can be chosen to be arbitrary even integers.  Thus, to ensure that Theorem~\ref{thm:12}  provides the desired bound on $g_4\big(nK(A,B)\big)$, we first select $A$ so that $\sigma_{1/3}(A) $ is large enough that  
\[ \big| \sum_{i=1}^{n} \sigma_\lambda\tau(K, \chi_i) +
  2a\sigma_{  1/3}(A) \big| >4g\] for all possible values of    $\sum_{i=1}^{n} \sigma_\lambda\tau(K, \chi_i) $ 
  and for all $0<a \le n$.  We then select $B$ so that $\sigma_{1/3}(B) $ is large enough to ensure that
  $ \big| \sum_{i=1}^{n} \sigma_\lambda\tau(K, \chi_i) +
  2a\sigma_{  1/3}(A)+
 2b\sigma_{  1/3}(B) \big| > 4g  $    for all possible values of    $\sum_{i=1}^{n} \sigma_\lambda\tau(K, \chi_i)$ and for all $a$ satisfying $0 \le a \le n$.  

Similarly,   to ensure that Theorem~\ref{thm:general}  provides the desired bound on $g_4\big( nK(A,B) \cs -nK(A,B)^r \big)$, we first select $A$ so that  in addition to satisfying the condition of $\sigma_{1/3}(A)$, it satisfies  $\big|\sigma_{1/7}(A) +\sigma_{2/7}(A) +\sigma_{3/7}(A)\big|  > 6n  $.  We then select $B$ so that , it satisfies \[\big|  \sigma_{1/7}(B) +\sigma_{2/7}(B) +\sigma_{3/7}(B)\big| > n(\big|\sigma_{1/7}(A) +\sigma_{2/7}(A) +\sigma_{3/7}(A)\big| + 6n \] while maintaining the condition on $\sigma_{1/3}(B)$.


 \section{Summary remarks}  In summary, for an arbitrary knot $K$, $0\le d(K, K^r) \le 2g_4(K)$.  We know that for reversible knots $d(K, K^r) = 0$ and that for all knots $K$ with $g_3(K) = 1$ we have $d(K, K^r) = 1$; more generally, we have shown here that for many knots, $d(K, K^r) \le 2g_4(K)- 1$.  In the reverse direction, we have constructed for each $g \ge 0$ a knot for which $g_4(K) = g = d(K, K^r)$.  Although we would conjecture that in some generic sense, $d(K, K^r) = 2g_4(K)$, we have been unable to solve the following problem.
 
 \smallskip
  
\begin{problem}
 Find a knot $K$ for which $d(K, K^r) > g_4(K)$.
 \end{problem}
 
 In the paper~\cite{2019arXiv190412014K}, topologically slice knots $K$ were constructed for which $d(K, K^r) =1$.  The following is open.
 
\begin{problem} Find a topologically slice knot   $K$ for which $d(K, K^r) \ge 2$.
  \end{problem}
  
  Our choice of the knots $A$ and $B$ depends on $g$.  Is this necessary?
  
  \begin{problem}
  Find a knot $K$ such that $g_4(nK) = n = d(nK,  nK^r)$ for all $n$.
  \end{problem}
  
  Finally, as is usual, our proof relied on the careful construction of particular knots that met all our needs.  Are there more natural examples?  As an example, the knot $8_{17}$ is not concordant to its reverse~\cite{MR1670424}, so $d(8_{17}, 8_{17}^r) \ge 1$ and it is known that $g_4(8_{17}) = 1$.  The challenging case is for knots $K$ with $g_4(K) \ge 2$.
  
   \begin{problem}
 Are there any low-crossing number knots $K$ for which $g_4(K) \ge 2 $ and  $d(K, K^r)\ge g_4(K)$?
  \end{problem}
  
  KnotInfo~\cite{knotinfo} lists 360 prime knots $K$ of 12 or fewer crossings which are not reversible and which satisfy $g_4(K) \ge 2$.  Presumably one can show many satisfy $d(K, K^r) \ge 1$.  I am aware of no techniques that can applied to show that any have $d(K,K^r) \ge 2$.




\bibliography{../../BibTexComplete}
\bibliographystyle{plain}

\end{document}